\let\oldproofname=\proofname
\renewcommand{\proofname}{\rm\bf{\oldproofname}}
\def\specialsection{\@startsection{section}{1}%
  \z@{\linespacing\@plus\linespacing}{.5\linespacing}%
  {\normalfont}}
\def\section{\@startsection{section}{1}%
  \z@{.7\linespacing\@plus\linespacing}{.5\linespacing}%
  {\normalfont\scshape}}
\numberwithin{equation}{section}
\newtheorem{theorem}{Theorem}[section]
\newtheorem{lemma}[theorem]{Lemma}
\newtheorem{cor}[theorem]{Corollary}
\theoremstyle{definition}
\newtheorem{remark}[theorem]{Remark}
\theoremstyle{theorem}
\def\XXint#1#2#3{{\setbox0=\hbox{$#1{#2#3}{\int}$}
\vcenter{\hbox{$#2#3$}}\kern-.5\wd0}}
\begin{document}

\title[On embeddings between spaces of functions of generalized bounded variation]{On embeddings between spaces of functions of generalized bounded variation}

\author[G. H. Esslamzadeh and M. Moazami Goodarzi]{G. H. Esslamzadeh and M. Moazami Goodarzi}

\address[G. H. Esslamzadeh]{Department  of Mathematics, Faculty of Sciences, Shiraz University, Shiraz 71454, Iran}
\email{esslamz@shirazu.ac.ir}

\address[M. Moazami Goodarzi]{Department of Mathematics, Faculty of Sciences, Shiraz University, Shiraz 71454, Iran}
\email{milad.moazami@gmail.com}


\date{\today}

\keywords{generalized bounded variation, modulus of variation, Embedding, Banach space}
\subjclass[2010]{Primary  46E35; Secondary 26A45}

\begin{abstract}
In this note, we aim to establish a number of embeddings between various function spaces that are frequently considered in the theory of Fourier series. More specifically,
we give sufficient conditions for the embeddings $\Phi V[h]\subseteq \Lambda\text{BV}^{(p_n\uparrow p)}$, $\Lambda V[h_1]^{(p)}\subseteq\Gamma V[h_2]^{(q)}$ and
$\Lambda\text{BV}^{(p_n\uparrow p)}\subseteq\Gamma\text{BV}^{(q_n\uparrow q)}$. Our results are new even for the well-known spaces that have been studied
in the literature.
In particular, a number of results due to M. Avdispahi\'{c}, that describe relationships between the classes $\Lambda\text{BV}$ and $V[h]$, are derived as special cases.

\end{abstract}

\maketitle


\section{\bf{Introduction and preliminaries}}\label{Sect:1}

The Jordan class BV of functions of bounded variation has been generalized by many authors in various ways (see \cite{Appell}). In particular, Schembari and Schramm
introduced the space $\Phi V[h]$ in \cite{SS} to encompass previous generalizations:

Let $\Phi=\{\phi_j\}_{j=1}^\infty$ be a sequence of increasing convex functions on $[0,\infty)$ such that $\phi_j(0)=0$ for all $j$, and $0<\phi_{j+1}(x)\leq\phi_j(x)$
for all $x>0$. If $h$ is a nondecreasing sequence of positive reals, we say that $\Phi$ is a Schramm sequence (with respect to $h$) provided that for each $x>0$,
$\sum_{j=1}^\infty\frac{\phi_j(x)}{h(n)}=\infty$ as $n\rightarrow\infty$. A real-valued function $f$ on $[a,b]$ is said to be of bounded $\Phi$-$h$-variation if
$$
V_{\Phi,h}(f)=\sup_{1\leq n<\infty}\frac{v(n,\Phi,f)}{h(n)}<\infty,
$$
where $v(n,\Phi,f)=v(n,\Phi,f,[a,b])$ is the $\Phi$-modulus of variation of $f$, that is, the supremum of the sums $\sum_{j=1}^n \phi_j(|f(I_j)|)$, taken over all finite collections $\{I_j\}_{j=1}^n$ of
nonoverlapping subintervals of $[a,b]$ and $f(I_j)=f(\sup I_j)-f(\inf I_j)$. We denote by $\Phi V[h]$ the linear space of all functions $f$ on $[a,b]$ such
that $V_{\Phi,h}(cf)<\infty$ for some constant $c>0$.

It is shown in \cite{SS} that $\Phi V[h]$ is indeed a Banach space with respect to the norm
$$
\|f\|_{\Phi,h}:=|f(a)|+\inf\{c>0:V_{\Phi,h}(f-f(a)/c)\leq1\}.
$$
This space has many applications in Fourier analysis
as well as in treating topics such as integration, convergence, summability, etc. (see e.g. \cite{Wt1,SW2,SS}).

If $\phi$ is a strictly increasing convex function on $[0,\infty)$ with $\phi(0)=0$, and if $\Lambda=\{\lambda_j\}_{j=1}^\infty$ is
a Waterman sequence (i.e., $\Lambda$ is a nondecreasing sequence of positive numbers such that $\sum_{j=1}^\infty\frac1{\lambda_j}=\infty\:$), by taking $\phi_j(x)=\phi(x)/\lambda_j$ for all $j$, we get the class $\phi\Lambda\text{BV}$ of functions of $\phi\Lambda$-bounded variation. This
class was introduced by Schramm and Waterman in \cite{SW} (see also \cite{L}). More specifically, if $\phi(x)=x^p$ ($p\geq1$), we get the Waterman-Shiba class
$\Lambda\text{BV}^{(p)}$, which was introduced by Shiba in \cite{S}. When $p=1$, we obtain the well-known Waterman class $\Lambda\text{BV}$. Also, if $h$ is
a modulus of variation (i.e., a nondecreasing and concave sequence of positive reals) and $\phi_j(x)=x$ for all $j$, the Chanturiya class $V[h]$ is obtained as
a special subclass of $\Phi V[h]$.

In the literature, much attention has been devoted to the study of relationships between the above-mentioned classes; see \cite{Wt1}, \cite{Perlman}, \cite{Avdis},
\cite{KY}, \cite{GHM} and the references therein for some results in this direction. In particular, a characterization of embeddings between
$\Lambda\text{BV}$ classes was obtained by Perlman and Waterman \cite{Perlman}. Ge and Wang characterized the embeddings $\Lambda\text{BV}\subseteq\phi\text{BV}$ and $\phi\text{BV}\subseteq\Lambda\text{BV}$.
Kita and Yoneda showed in \cite{KY} that the embedding
$\text{BV}_p\subseteq\text{BV}^{(p_n\uparrow\infty)}$ is both automatic and strict for all $1\leq p<\infty.$
Furthermore, Goginava characterized the embedding $\Lambda\text{BV}\subseteq\text{BV}^{(q_n\uparrow\infty)}$, and a characterization of
the embedding $\Lambda\text{BV}^{(p)}\subseteq\text{BV}^{(q_n\uparrow q)}$ ($1\leq q\leq\infty$) was given by Hormozi, Prus-Wi\'{s}niowski and Rosengren in \cite{HPR}.
More recently, the embeddings $\Lambda\text{BV}^{(p)}\subseteq \Gamma\text{BV}^{(q_n\uparrow q)}$ and $\Phi\text{BV}\subseteq\text{BV}^{(q_n\uparrow q)}$ ($1\leq q\leq\infty$) were investigated
by Goodarzi, Hormozi and Memi\'{c} (see \cite{GHM}).


\section{\bf{Results}}\label{Sect:2}

Our first main result presents a sufficient condition for the embedding $\Phi V[h]\subseteq \Lambda\emph{BV}^{(p_n\uparrow p)}$ (see Theorem (\ref{t1}) below).
Before that, we need a lemma.

If $\Phi=\{\phi_j\}_{j=1}^\infty$ is a Schramm sequence, we define $\Phi_k(x):=\sum_{j=1}^k\phi_j(x)$ for $x\geq0$. Then $\Phi_k(x)$ is clearly an increasing
convex function on $[0,\infty)$ such that $\Phi_k(0)=0$ and $\Phi_k(x)>0$ for $x>0$. Without loss of generality we assume that $\Phi_k(x)$ is strictly
increasing on $[0,\infty)$. We denote by $\Phi_k^{-1}(x)$ the inverse function of $\Phi_k(x)$. If $\lambda=\{\lambda_j\}$ and $\Gamma=\{\gamma_j\}$ are
Waterman sequences, for each $n$ we define $\Lambda(n):=\sum_{j=1}^n\frac1{\lambda_j}$ and $\Gamma(n):=\sum_{j=1}^n\frac1{\gamma_j}$.

\begin{lemma}\label{lemma1}
Let $1<q<\infty$ and $k\in\mathbb{N}$. If $f\in \Phi V[h]$ and $x_1,x_2,...,x_k$ are nonnegative real numbers such that
$$
\sum_{j=1}^k \phi_j(x_{\tau(j)})\leq v(k,\Phi,f)
$$
for any permutation $\tau$ of $k$ letters, then
\begin{equation}\label{conc}
\Big(\sum_{j=1}^k x_j^q\Big)^{\frac1{q}}\leq 16\left(1+V_{\Phi,h}(f)\right)\max_{1\leq m\leq k} m^{\frac1{q}}\Phi_m^{-1}\big(h(k)\big).
\end{equation}
\end{lemma}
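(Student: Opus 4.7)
The plan is to apply Bauer's maximum principle to the convex function $F(a):=\sum_{l=1}^{k}a_{l}^{q}$ on a compact convex subset of $\R^{k}$. Since both the hypothesis and the conclusion are invariant under relabelling of the indices, we may assume that
\[
x_{1}\ge x_{2}\ge\cdots\ge x_{k}\ge 0.
\]
Taking $\tau=\mathrm{id}$ in the hypothesis then yields the single constraint $\sum_{j=1}^{k}\phi_{j}(x_{j})\le v:=v(k,\Phi,f)$.

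Write $V:=V_{\Phi,h}(f)$, so that $v\le V\,h(k)$, and $G(m):=\Phi_{m}^{-1}(h(k))$, so $A:=\max_{1\le m\le k}m^{1/q}G(m)$ is the right-hand side of the claim. Consider the convex compact set
\[
S:=\Bigl\{a\in\R^{k}:a_{1}\ge\cdots\ge a_{k}\ge 0,\ \sum_{j=1}^{k}\phi_{j}(a_{j})\le v\Bigr\}.
\]
Since $F$ is continuous and convex on $\R^{k}$ for $q>1$, Bauer's maximum principle gives $\max_{S}F=\max\{F(e):e\text{ extreme point of }S\}$. The central claim is that the extreme points of $S$ are exactly the origin together with the $k$ step vectors
\[
e_{j}:=\bigl(\Phi_{j}^{-1}(v),\ldots,\Phi_{j}^{-1}(v),0,\ldots,0\bigr)\quad (j\text{ leading entries}),\qquad j=1,\ldots,k.
\]
This follows because the monotone cone $\{a_{1}\ge\cdots\ge a_{k}\ge 0\}$ has precisely the $k$ extreme rays spanned by the prefix indicators $\mathbf{1}_{\{1,\ldots,j\}}$; intersecting each such ray with the convex sublevel set $\{\sum_{j}\phi_{j}(a_{j})\le v\}$ yields the segment from $0$ to $e_{j}$, whose endpoints are the extreme points of $S$.

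Once this characterization is in hand, the final estimate is routine. Convexity of $\Phi_{j}$ together with $\Phi_{j}(0)=0$ yields $\Phi_{j}(\alpha y)\ge\alpha\Phi_{j}(y)$ for every $\alpha\ge 1$, and treating the cases $V\le 1$ and $V>1$ separately gives $\Phi_{j}^{-1}(v)\le(1+V)G(j)$. Therefore
\[
F(e_{j})=j\,\Phi_{j}^{-1}(v)^{q}\le (1+V)^{q}\,j\,G(j)^{q}\le(1+V)^{q}A^{q},
\]
and taking $q$-th roots delivers $\bigl(\sum x_{l}^{q}\bigr)^{1/q}\le (1+V)A$, which proves the lemma (in fact with the sharper constant $1$ in place of $16$).

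The main obstacle is the extreme-point characterization: one must rule out extreme points on the nonlinear face $\sum_{j}\phi_{j}(a_{j})=v$ other than the $e_{j}$, by showing that any $a\in S$ with two distinct positive values admits a nontrivial feasible perturbation using the tangent hyperplane to the nonlinear constraint together with an inactive monotonicity inequality. This is standard convex analysis, but the bookkeeping between the monotonicity cone and the single nonlinear constraint must be done carefully.
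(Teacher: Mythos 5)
Your reduction to the sorted case and your closing step (deriving $\Phi_j^{-1}(v)\le(1+V)\Phi_j^{-1}(h(k))$ from concavity of $\Phi_j^{-1}$ and $v\le V\,h(k)$) are both fine; the latter is exactly how the paper passes from $v(k,\Phi,f)$ to $h(k)$. The fatal problem is the step you yourself flag as the main obstacle. The claim that the extreme points of
\[
S=\Bigl\{a\in\R^k: a_1\ge\cdots\ge a_k\ge0,\ \sum_{j=1}^k\phi_j(a_j)\le v\Bigr\}
\]
are only the origin and the prefix vectors $e_j$ is false whenever the $\phi_j$ are strictly convex, and the perturbation argument you sketch cannot repair it. Take $k=2$, $\phi_1(x)=\phi_2(x)=x^2$ (permitted, since only $\phi_{j+1}\le\phi_j$ is required) and $v=1$. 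Then $S$ is the sector of the unit disk between the rays $\theta=0$ and $\theta=\pi/4$, and \emph{every} point of the circular arc joining $e_1=(1,0)$ to $e_2=(1/\sqrt2,1/\sqrt2)$ is an extreme point of $S$: the sublevel set $\{a_1^2+a_2^2\le1\}$ is strictly convex, so no boundary point of it is a nontrivial convex combination of points of $S$. In particular, at such a point the tangent hyperplane to the nonlinear constraint meets $S$ only at that point, so the ``nontrivial feasible perturbation'' you propose does not exist; your intuition is borrowed from the polyhedral case (all $\phi_j$ linear), where it is correct. Bauer's principle therefore only tells you that the maximum of $F$ is attained somewhere on the curved face, over which you have no control. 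Note also that $S$ is not the convex hull of $\{0,e_1,\dots,e_k\}$ (the arc bulges outside that simplex), so the argument cannot be salvaged by showing $x\in\mathrm{conv}\{0,e_1,\dots,e_k\}$ either.

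Whether $\max_S F\le\max_j F(e_j)$ is nevertheless true is a separate question (it holds in the low-dimensional examples one can check by hand), but your proof does not establish it, and your claimed improvement of the constant from $16$ to $1$ must be regarded as unproved. The paper's own proof is entirely different and much shorter: it quotes from the proof of Theorem 2.1 of \cite{Wu} the inequality
\[
\Bigl(\sum_{j=1}^k x_j^q\Bigr)^{1/q}\le 16\max_{1\le m\le k}m^{1/q}\Phi_m^{-1}\bigl(v(k,\Phi,f)\bigr),
\]
and then applies the concavity estimate $\Phi_m^{-1}(at)\le(1+a)\Phi_m^{-1}(t)$ with $a=V_{\Phi,h}(f)$, $t=h(k)$. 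If you want a self-contained argument, the productive route is to reconstruct Wu's combinatorial estimate (starting from the pointwise bounds $x_m\le\Phi_m^{-1}(v)$ for sorted $x$, where the factor $16$ absorbs the loss in summing these bounds), not to enumerate extreme points of $S$.
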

\begin{proof}
Note first that following the arguments in the proof of \cite[Theorem 2.1]{Wu} one can verify that
$$
\Big(\sum_{j=1}^k x_j^q\Big)^{\frac1{q}}\leq 16\max_{1\leq m\leq k} m^{\frac1{q}}\Phi_m^{-1}\big(v(k,\Phi,f)\big).
$$
On the other hand, since the $\Phi_m^{-1}$ are strictly increasing concave functions with $\Phi_m^{-1}(0)=0$, we get
$$
\Phi_m^{-1}(at)\leq(1+a)\Phi_m^{-1}(t), \ \ \ \text{for any} \ \ \ a,t>0.
$$
Now, applying the latter inequality with
$$
a:=V_{\Phi,h}(f) \ \ \ \text{and} \ \ \ t:=h(k)
$$
yields (\ref{conc}), as desired.
\end{proof}

\begin{theorem}\label{t1}
The embedding $\Phi V[h]\subseteq \Lambda\emph{BV}^{(p)}$ holds whenever
$$
\sum_{k=1}^\infty \Delta\Big(\frac1{\lambda_k}\Big)\max_{1\leq m\leq k}m\Big(\Phi_m^{-1}\big(h(k)\big)\Big)^{p}<\infty,
$$
where $\Delta(a_k)=a_k-a_{k+1}$.
\end{theorem}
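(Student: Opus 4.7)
The plan is to fix $f\in\Phi V[h]$ with $V_{\Phi,h}(f)<\infty$ together with an arbitrary finite collection $\{I_j\}_{j=1}^n$ of nonoverlapping subintervals of $[a,b]$, and then bound $\sum_{j=1}^n |f(I_j)|^p/\lambda_j$ by a constant that depends only on $V_{\Phi,h}(f)$ and on the series in the hypothesis; taking the supremum will give $V_{\Lambda,p}(f)<\infty$, i.e.\ $f\in\Lambda\text{BV}^{(p)}$. A standard homogeneity argument then reduces the general case $f\in\Phi V[h]$ to this one.

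First, I would use that $\{1/\lambda_j\}$ is nonincreasing so that the rearrangement inequality allows me to assume the intervals are indexed with $x_j:=|f(I_j)|$ nonincreasing. Then I would invoke Lemma \ref{lemma1} with $q=p$, after observing that its hypothesis is automatic here: for any permutation $\tau$ of $\{1,\dots,k\}$ the reindexed collection $\{I_{\tau(j)}\}_{j=1}^k$ is still nonoverlapping, whence $\sum_{j=1}^k \phi_j(x_{\tau(j)})\le v(k,\Phi,f)$. Setting
$$S_k:=\sum_{j=1}^k x_j^p,\qquad b_k:=\max_{1\le m\le k} m\,\Phi_m^{-1}(h(k))^p,\qquad C:=16^p(1+V_{\Phi,h}(f))^p,$$
the lemma yields $S_k\le C\,b_k$, and I note that $\{b_k\}$ is nondecreasing.

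Next, I would pass to the Waterman--Shiba sum by Abel summation:
$$\sum_{j=1}^n\frac{x_j^p}{\lambda_j}=\sum_{k=1}^{n-1}\Delta(1/\lambda_k)\,S_k+\frac{S_n}{\lambda_n}\le C\sum_{k=1}^{n-1}\Delta(1/\lambda_k)\,b_k+\frac{C\,b_n}{\lambda_n}.$$
The first sum is immediately $\le C\sum_{k=1}^\infty \Delta(1/\lambda_k)\,b_k$, which is finite by hypothesis. For the boundary term I would exploit the monotonicity of $\{b_k\}$ together with the telescoping identity $1/\lambda_n=\sum_{k\ge n}\Delta(1/\lambda_k)$ to write
$$\frac{b_n}{\lambda_n}=b_n\sum_{k=n}^\infty\Delta(1/\lambda_k)\le \sum_{k=n}^\infty\Delta(1/\lambda_k)\,b_k,$$
which is a tail of the same convergent series. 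This absorbs $S_n/\lambda_n$ into (twice) the series bound, giving $\sum_{j=1}^n x_j^p/\lambda_j\le 2C\sum_{k\ge 1}\Delta(1/\lambda_k)\,b_k$ uniformly in $n$.

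The main obstacle, in my view, is precisely this control of the boundary term $S_n/\lambda_n$: the crux is that $\{b_k\}$ is monotone \emph{and} that the telescoping of $1/\lambda_n$ dovetails with the summability assumption in the hypothesis. A minor side issue is the degenerate regime $\lambda_k\not\to\infty$, where the identity $1/\lambda_n=\sum_{k\ge n}\Delta(1/\lambda_k)$ must be replaced by $1/\lambda_n=\sum_{k\ge n}\Delta(1/\lambda_k)+\lim_k 1/\lambda_k$; however, in that regime $\Lambda\text{BV}^{(p)}$ essentially collapses to the Wiener class $\text{BV}^{(p)}$ and the argument carries over with only cosmetic changes.
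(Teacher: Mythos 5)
Your argument is essentially identical to the paper's proof: apply Lemma \ref{lemma1} with $q=p$ to the partial sums, perform Abel summation against $1/\lambda_k$, and absorb the boundary term $S_n/\lambda_n$ into the tail of the hypothesis series using the monotonicity of $b_k$ together with the telescoping identity for $1/\lambda_n$. The one caveat is your closing aside: when $\lambda_k\not\to\infty$ the hypothesis series can be finite (even identically zero) while the boundary term remains uncontrolled, so the argument does not in fact ``carry over with cosmetic changes'' there --- but the paper's own proof silently makes the same assumption that $\lim_k 1/\lambda_k=0$, so this does not distinguish your proof from theirs.
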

\begin{proof}
Let $f\in \Phi\text{V}[h]$, so there exists some $c>0$ such that $V_{\Phi,h}(cf)<\infty$. Without loss of generality we may assume that $c=1$. Let
$\{I_j\}_{j=1}^s$ be a nonoverlapping collection of subintervals of $[0,1]$. When $q\geq1$ we may use
Lemma \eqref{lemma1} with $x_j=|f(I_j)|$ to get
\begin{equation}
\label{Wu's estimate}
\Big(\sum_{j=1}^s |f(I_j)|^q\Big)^{\frac1{q}}\leq 16\left(1+V_{\Phi,h}(f)\right)\max_{1\leq m\leq s} m^{\frac1{q}}\Phi_m^{-1}\big(h(s)\big).
\end{equation}
In order to prove that $V_\Lambda(f)<\infty$, we need to estimate the sum $\sum_{k=1}^s \frac{|f(I_k)|^{p}}{\lambda_k}$. Taking
$x_k:=\frac1{\lambda_k}$ and $y_k:=|f(I_k)|^{p}$ in Abel's partial summation formula
$$
\sum_{k=1}^s x_ky_k=\sum_{k=1}^{s-1}\Delta(x_k)\sum_{j=1}^{k}y_j+x_s\sum_{j=1}^sy_j,
$$
one can write
$$
\sum_{k=1}^s \frac{|f(I_k)|^{p}}{\lambda_k}=\sum_{k=1}^{s-1}\Delta\Big(\frac1{\lambda_k}\Big)\sum_{j=1}^k |f(I_j)|^{p}+\frac1{\lambda_s}\sum_{j=1}^s |f(I_j)|^{p}.
$$
Then, applying (\ref{Wu's estimate}) with $q=p$ to estimate the right-hand side of the preceding equality, it follows  that
\begin{align*}
\sum_{k=1}^s \frac{|f(I_k)|^{p}}{\lambda_k}\leq\sum_{k=1}^{s-1}\Delta\Big(\frac1{\lambda_k}\Big)C^{p}\max_{1\leq m\leq k}m\Big(\Phi_m^{-1}(h(k))\Big)^{p}
+\frac1{\lambda_s}C^{p}\max_{1\leq m\leq s}m\Big(\Phi_m^{-1}(h(s))\Big)^{p}\\[.1in]
&\hspace{-13cm}\leq\sum_{k=1}^{s-1}\Delta\Big(\frac1{\lambda_k}\Big)C^{p}\max_{1\leq m\leq k}m\Big(\Phi_m^{-1}(h(k))\Big)^{p}
+\sum_{k=s}^{\infty}\Delta\Big(\frac1{\lambda_k}\Big)C^{p}\max_{1\leq m\leq k}m\Big(\Phi_m^{-1}(h(k))\Big)^{p}\\[.1in]
&\hspace{-13cm}\leq C^{p}\sum_{k=1}^\infty \Delta\Big(\frac1{\lambda_k}\Big)\max_{1\leq m\leq k}m\Big(\Phi_m^{-1}(h(k))\Big)^{p}<\infty,
\end{align*}
where $C=16\left(1+V_{\Phi,h}(f)\right)$ and the penultimate inequality is due to the fact that
$$
\frac1{\lambda_s}\max_{1\leq m\leq s}m\Big(\Phi_m^{-1}(h(s))\Big)^{p}\leq\sum_{k=s}^{\infty}\Delta\Big(\frac1{\lambda_k}\Big)\max_{1\leq m\leq k}m\Big(\Phi_m^{-1}(h(k))\Big)^{p}.
$$
This means that $f\in\Lambda\text{BV}^{(p)}$, as desired.
\end{proof}

\begin{cor}
The embedding $\Phi\emph{BV}\subseteq\Lambda\emph{BV}$ holds whenever
$$
\sum_{n=1}^\infty \Delta\Big(\frac1{\lambda_n}\Big)n\Phi_n^{-1}(1)<\infty.
$$
In particular, the embedding $\phi\Lambda\emph{BV}\subseteq\Gamma\emph{BV}$ holds whenever
$$
\sum_{n=1}^\infty \Delta\Big(\frac1{\gamma_n}\Big)n\phi^{-1}(\Lambda(n)^{-1})<\infty.
$$
\end{cor}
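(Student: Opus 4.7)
The plan is to derive both embeddings as direct specializations of Theorem \ref{t1}. For the first statement, I would take $p=1$ and $h(k)\equiv 1$ in Theorem \ref{t1}; then $\Lambda\mathrm{BV}^{(p)}$ becomes $\Lambda\mathrm{BV}$, $\Phi V[h]$ becomes $\Phi\mathrm{BV}$, and the sufficient condition of the theorem reduces to
$$
\sum_{k=1}^\infty \Delta\Big(\frac{1}{\lambda_k}\Big)\max_{1\leq m\leq k}m\,\Phi_m^{-1}(1)<\infty.
$$
To match the simpler form in the corollary, the key step will be to show that $\max_{1\leq m\leq k}m\,\Phi_m^{-1}(1)=k\,\Phi_k^{-1}(1)$; equivalently, that the sequence $\{m\,\Phi_m^{-1}(1)\}_{m\geq 1}$ is nondecreasing.

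This monotonicity is the main (and only non-routine) obstacle, and I would establish it as follows. Write $x_m:=\Phi_m^{-1}(1)$. Since $\Phi_m=\Phi_{m-1}+\phi_m\geq\Phi_{m-1}$ and $\Phi_{m-1}$ is strictly increasing, $x_m\leq x_{m-1}$. From $\phi_{j+1}\leq\phi_j$ we obtain $\phi_j(x_m)\geq\phi_m(x_m)$ for $1\leq j\leq m$, hence
$$
1=\Phi_m(x_m)\geq m\,\phi_m(x_m),
$$
i.e., $\phi_m(x_m)\leq 1/m$. The convexity of $\Phi_{m-1}$ together with $\Phi_{m-1}(0)=0$ yields the subhomogeneity $\Phi_{m-1}(\alpha y)\leq\alpha\,\Phi_{m-1}(y)$ for $\alpha\in[0,1]$ and $y\geq 0$; applying it with $y=x_{m-1}$ and $\alpha=x_m/x_{m-1}$ gives $\Phi_{m-1}(x_m)\leq x_m/x_{m-1}$. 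Combining these estimates,
$$
\frac{x_m}{x_{m-1}}\geq\Phi_{m-1}(x_m)=1-\phi_m(x_m)\geq\frac{m-1}{m},
$$
so $m\,x_m\geq(m-1)\,x_{m-1}$, as needed.

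For the ``in particular'' statement, I would specialize the first part to $\phi_j(x)=\phi(x)/\lambda_j$, after renaming the outer Waterman sequence from $\{\lambda_k\}$ to $\{\gamma_k\}$ to avoid the clash with the $\lambda_j$ defining $\Phi$. With this choice, $\Phi_n(x)=\phi(x)\,\Lambda(n)$, so $\Phi_n^{-1}(1)=\phi^{-1}(\Lambda(n)^{-1})$; substituting this expression into the condition of the first part produces exactly the stated sufficient condition for $\phi\Lambda\mathrm{BV}\subseteq\Gamma\mathrm{BV}$.
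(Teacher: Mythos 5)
Your proposal is correct and follows exactly the route the paper intends: the corollary is a direct specialization of Theorem \ref{t1} with $p=1$, $h\equiv 1$ (and $\Phi_n^{-1}(1)=\phi^{-1}(\Lambda(n)^{-1})$ when $\phi_j=\phi/\lambda_j$). The paper states the corollary without proof and silently replaces $\max_{1\leq m\leq k}m\,\Phi_m^{-1}(1)$ by $k\,\Phi_k^{-1}(1)$; your verification that $\{m\,\Phi_m^{-1}(1)\}$ is nondecreasing (via $\phi_m(x_m)\leq 1/m$ and the subhomogeneity of the convex $\Phi_{m-1}$) is a sound justification of that implicit step.
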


\begin{cor}\emph{(\cite[Theorem 2]{Avdis})}
The embedding $V[h]\subseteq \Lambda\emph{BV}$ holds whenever
$$
\sum_{n=1}^\infty \Delta\Big(\frac1{\lambda_n}\Big)h(n)<\infty.
$$
\end{cor}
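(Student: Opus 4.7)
The plan is to derive this corollary as an immediate specialization of Theorem~\ref{t1}. First, I would recall from the introduction that the Chanturiya class $V[h]$ coincides with the Schramm space $\Phi V[h]$ corresponding to the choice $\phi_j(x)=x$ for every $j\geq 1$, and that the Waterman class $\Lambda\text{BV}$ is just $\Lambda\text{BV}^{(p)}$ with $p=1$. Under these identifications, the present statement is precisely Theorem~\ref{t1} applied with this particular $\Phi$ and $p=1$, so it suffices to simplify the sufficient condition of Theorem~\ref{t1} in this setting.

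Next, I would compute $\Phi_m^{-1}$ explicitly: by definition $\Phi_m(x)=\sum_{j=1}^m \phi_j(x)=mx$, so $\Phi_m^{-1}(y)=y/m$. Consequently, for all $1\leq m\leq k$,
$$
m\bigl(\Phi_m^{-1}(h(k))\bigr)^{1}=m\cdot\frac{h(k)}{m}=h(k),
$$
and hence $\max_{1\leq m\leq k} m\,\Phi_m^{-1}(h(k))=h(k)$. Substituting this identity into the hypothesis of Theorem~\ref{t1} (with $p=1$) turns it into the condition $\sum_{n=1}^\infty \Delta(1/\lambda_n)\,h(n)<\infty$ stated in the corollary, which yields the conclusion.

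I do not anticipate any substantive obstacle: the argument is essentially a one-line specialization. The only points that might warrant a brief remark are the identifications $V[h]=\Phi V[h]$ (for $\phi_j(x)=x$) and $\Lambda\text{BV}=\Lambda\text{BV}^{(1)}$, both of which are immediate from the definitions given in the introduction, together with the trivial evaluation of $\Phi_m^{-1}$ above.
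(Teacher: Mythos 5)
Your specialization is correct and is exactly the route the paper intends: the corollary is stated without proof as a direct consequence of Theorem~\ref{t1}, obtained by taking $\phi_j(x)=x$ (so $\Phi_m^{-1}(y)=y/m$ and $m\,\Phi_m^{-1}(h(k))=h(k)$) and $p=1$. No gaps.
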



\vspace{.25cm}

Recently the second author et al. \cite{GHM} obtained the following inequality and used it to characterize the embedding
$\Lambda\text{BV}^{(p)}\subseteq\Gamma\text{BV}^{(q_n\uparrow q)}$:
\begin{equation}\label{ineq}
\Big(\sum_{j=1}^n x_j^qz_j\Big)^{\frac1{q}}\leq\sum_{j=1}^n x_jy_j\max_{1\leq k\leq n}\Big(\sum_{j=1}^k z_j\Big)^{\frac1{q}}\Big(\sum_{j=1}^k y_j\Big)^{-1},
\end{equation}
where $1\leq q< \infty$, and $\{x_j\}$, $\{y_j\}$ and $\{z_j\}$ are positive nonincreasing sequences. In the sequel, we will
further exploit (\ref{ineq}) to prove the forthcoming results.

\begin{theorem}\label{2}
Let $1\leq p\leq q<\infty$. Let either $\Big\{\Gamma(n)^{\frac1{q}}/\Lambda(n)^{\frac1{p}}\Big\}$ or
$\Big\{h_2(n)^{\frac1{q}}/h_1(n)^{\frac1{p}}\Big\}$ be nondecreasing. Then the embedding $\Lambda V[h_1]^{(p)}\subseteq\Gamma V[h_2]^{(q)}$ holds whenever
\begin{equation}\label{c2}
\sup_{1\leq n<\infty}\left(\frac{\Gamma(n)}{h_2(n)}\right)^{\frac1{q}}\left(\frac{h_1(n)}{\Lambda(n)}\right)^{\frac1{p}}<\infty.
\end{equation}
\end{theorem}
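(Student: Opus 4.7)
The strategy is to apply inequality (\ref{ineq}), but with a judicious substitution that converts a $p$-th power sum (the quantity governing membership in $\Lambda V[h_1]^{(p)}$) into a $q$-th power sum (defining $\Gamma V[h_2]^{(q)}$). Since $p\leq q$, the ratio $q/p\geq1$ is the exponent at which (\ref{ineq}) will be invoked.

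Concretely, fix $f\in\Lambda V[h_1]^{(p)}$ and a nonoverlapping collection $\{I_j\}_{j=1}^n\subset[a,b]$. By the rearrangement inequality, both $\sum_j\lambda_j^{-1}|f(I_j)|^p$ and $\sum_j\gamma_j^{-1}|f(I_j)|^q$ are maximised when the labelling of the intervals satisfies $|f(I_1)|\geq\cdots\geq|f(I_n)|$, and the defining suprema of the two classes already range over all such relabellings; hence I may assume this ordering. Then $\{|f(I_j)|^p\}$, $\{\lambda_j^{-1}\}$, $\{\gamma_j^{-1}\}$ are all positive and nonincreasing, as required by (\ref{ineq}).

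Applying (\ref{ineq}) with $x_j:=|f(I_j)|^p$, $y_j:=\lambda_j^{-1}$, $z_j:=\gamma_j^{-1}$, and with the exponent $q$ there replaced by $q/p\geq 1$, and then taking $p$-th roots, yields
$$
\Big(\sum_{j=1}^n\frac{|f(I_j)|^q}{\gamma_j}\Big)^{1/q}\leq\Big(\sum_{j=1}^n\frac{|f(I_j)|^p}{\lambda_j}\Big)^{1/p}\max_{1\leq k\leq n}\frac{\Gamma(k)^{1/q}}{\Lambda(k)^{1/p}}.
$$
The hypothesis $f\in\Lambda V[h_1]^{(p)}$ bounds the first factor on the right by a constant multiple of $h_1(n)^{1/p}$, so it remains to control
$$
\max_{1\leq k\leq n}\frac{\Gamma(k)^{1/q}}{\Lambda(k)^{1/p}}\cdot\frac{h_1(n)^{1/p}}{h_2(n)^{1/q}}
$$
uniformly in $n$; this is precisely what is needed to conclude $f\in\Gamma V[h_2]^{(q)}$.

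Handling this maximum is the only remaining issue, and here the two alternative monotonicity hypotheses are used. If $\{\Gamma(n)^{1/q}/\Lambda(n)^{1/p}\}$ is nondecreasing, the maximum is attained at $k=n$, so the displayed quantity equals precisely the expression in (\ref{c2}) and is bounded by assumption. If instead $\{h_2(n)^{1/q}/h_1(n)^{1/p}\}$ is nondecreasing, I would absorb the $n$-dependent $h$-factor inside the maximum: for each $k\leq n$ one has $h_1(n)^{1/p}/h_2(n)^{1/q}\leq h_1(k)^{1/p}/h_2(k)^{1/q}$, reducing the expression to $\max_{1\leq k\leq n}\Gamma(k)^{1/q}h_1(k)^{1/p}\Lambda(k)^{-1/p}h_2(k)^{-1/q}$, which is again dominated by the supremum in (\ref{c2}). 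In either case the required embedding follows. The only genuinely delicate point in this plan is verifying the correct dimensional bookkeeping when substituting into (\ref{ineq}) and extracting the $p$-th root; the rest is a clean combination of the rearrangement inequality with the two-case monotonicity argument.
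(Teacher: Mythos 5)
Your proposal is correct and follows essentially the same route as the paper: the same substitution $x_j=|f(I_j)|^p$, $y_j=\lambda_j^{-1}$, $z_j=\gamma_j^{-1}$ into (\ref{ineq}) with exponent $q/p$, the same reduction to decreasing order of the $|f(I_j)|$, and the same final bound by $C\,h_2(n)^{1/q}$. The only difference is cosmetic: you spell out explicitly the two-case argument showing how either monotonicity hypothesis lets the maximum $\max_{1\leq k\leq n}\Gamma(k)^{1/q}\Lambda(k)^{-1/p}$ be absorbed into the supremum (\ref{c2}), a step the paper's proof leaves implicit.
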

\begin{proof}
Let $f\in\Lambda V[h_1]^{(p)}$ and consider a fixed $n$. Let $\{I_j\}_{j=1}^n$ be a nonoverlapping collection of subintervals of $[0,1]$. Set $x_j:=|f(I_j)|^p$,
$y_j:=1/\lambda_j$ and $z_j:=1/\gamma_j$. In view of the
equimonotonic sequences inequality \cite[Theorem 368]{Hardy} we can, and do, assume that the $x_j$ are arranged in descending order. Now, applying (\ref{ineq})
with $q/p\geq1$ in place of $q$ we obtain
$$
\Big(\sum_{j=1}^n \frac{|f(I_j)|^{q}}{\gamma_j}\Big)^{\frac{p}{q}}
\leq\sum_{j=1}^n \frac{|f(I_j)|^p}{\lambda_j}\max_{1\leq k\leq n} \frac{\Gamma(k)^{\frac{p}{q}}}{\Lambda(k)}.
$$
Therefore, we get
\begin{align*}\Big(\sum_{j=1}^n \frac{|f(I_j)|^{q}}{\gamma_j}\Big)^{\frac{1}{q}}
&\leq\Big(\sum_{j=1}^n \frac{|f(I_j)|^p}{\lambda_j}\Big)^{\frac{1}{p}}\max_{1\leq k\leq n} \frac{\Gamma(k)^{\frac{1}{q}}}{\Lambda(k)^{\frac{1}{p}}}\\
&\leq\Big(v(n;\Lambda,p,f)\Big)^{\frac{1}{p}}\max_{1\leq k\leq n} \frac{\Gamma(k)^{\frac{1}{q}}}{\Lambda(k)^{\frac{1}{p}}}\\
&\leq C.h_1(n)^{\frac{1}{p}}.\frac{h_2(n)^{\frac{1}{q}}}{h_1(n)^{\frac{1}{p}}}=C.h_2(n)^{\frac{1}{q}}.
\end{align*}
for some positive constant $C$, depending solely on $f$.
As a result, taking supremum over all collections $\{I_j\}_{j=1}^n$ as above, it follows that
$$
v(n;\Gamma,q,f)\leq C^q.h_2(n),
$$
which means that $f\in \Gamma V[h_2]^{(q)}$.
\end{proof}

An important consequence of the preceding theorem is the following result which provides a sufficient condition for the embedding $\Lambda BV\subseteq V[h]$
(see Remark \eqref{rem}).
\begin{cor}\label{WatermanChanturiya}
The embedding $\Lambda BV\subseteq V[h]$ holds whenever
\begin{equation}\label{suf}
\sup_{1\leq n<\infty}\frac{n}{\Lambda(n)h(n)}<\infty.
\end{equation}
\end{cor}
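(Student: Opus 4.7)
The plan is to deduce this corollary as a direct specialization of Theorem \ref{2}. Since the left-hand class $\Lambda\text{BV}$ fits the template $\Lambda V[h_1]^{(p)}$ and the Chanturiya class $V[h]$ fits the template $\Gamma V[h_2]^{(q)}$ for suitable parameter choices, the condition \eqref{suf} should fall out of \eqref{c2} by setting these parameters appropriately.

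Concretely, I would invoke Theorem \ref{2} with the following choices: take $p=q=1$; take the same Waterman sequence $\Lambda$ on the left; take $\Gamma=\{\gamma_j\}$ with $\gamma_j=1$ for every $j$, which is a Waterman sequence satisfying $\Gamma(n)=n$; take $h_1(n)=1$ for every $n$ (a constant, hence nondecreasing, sequence of positive reals); and take $h_2=h$. Under these substitutions, $\Lambda V[h_1]^{(p)}$ reduces to the class of $f$ with $\sup_n v(n;\Lambda,1,f)\leq\text{const}$, namely $\Lambda\text{BV}$, while $\Gamma V[h_2]^{(q)}$ reduces to $\{f:\sup_n v(n;f)/h(n)<\infty\}=V[h]$.

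Next I would verify the two structural hypotheses of Theorem \ref{2}. The sequence $h_2(n)^{1/q}/h_1(n)^{1/p}=h(n)$ is nondecreasing because $h$ is a modulus of variation (by definition nondecreasing and concave), so the disjunctive monotonicity hypothesis holds via its second alternative. Moreover, condition \eqref{c2} becomes
$$
\sup_{1\leq n<\infty}\left(\frac{n}{h(n)}\right)\left(\frac{1}{\Lambda(n)}\right)=\sup_{1\leq n<\infty}\frac{n}{\Lambda(n)h(n)}<\infty,
$$
which is precisely \eqref{suf}. Theorem \ref{2} then delivers $\Lambda\text{BV}\subseteq V[h]$.

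There is no real obstacle here beyond checking that the boundary parameter choices are admissible within the framework of Theorem \ref{2}: that the constant sequence $h_1\equiv 1$ qualifies as a weight, that $\Gamma\equiv 1$ qualifies as a Waterman sequence (it does, since $\sum 1=\infty$), and that the reduction of the quasinorms matches the classical definitions of $\Lambda\text{BV}$ and $V[h]$. Once those identifications are stated, the corollary is immediate from Theorem \ref{2}.
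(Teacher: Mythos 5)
Your proposal is correct and matches the paper's own proof essentially verbatim: the paper likewise specializes Theorem \ref{2} with $p=q=1$, $h_2=h$, $h_1\equiv 1$ and $\gamma_j\equiv 1$. The only cosmetic difference is that the paper verifies the disjunctive hypothesis via the first alternative (noting $\{n\Lambda(n)^{-1}\}$ is nondecreasing) whereas you use the second (that $h$ is nondecreasing); both are valid.
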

\begin{proof}
Note that $\big\{n{\Lambda(n)}^{-1}\big\}$ is nondecreasing and apply Theorem (\ref{2}) with $p=q=1$, $h_2=h$, $h_1(n)=1$ for all $n$, and $\gamma_j=1$ for all $j$.
\end{proof}

\begin{remark}\label{rem}
It is worth noting that the existence of a conditon that characterizes when $\Lambda BV$ can be embedded into $V[h]$ seems to have been unknown for a long time.
We conjecture that \eqref{suf} is a necessary condition as well.
\end{remark}
As an application of Corollary (\ref{WatermanChanturiya}), we deduce the following result by taking $h(n)=\frac{n}{\Lambda(n)}$.
\begin{cor}\emph{(\cite[Theorem 1]{Avdis})}
The following embedding holds:
$$
\Lambda BV\subseteq V[n\Lambda(n)^{-1}].
$$
\end{cor}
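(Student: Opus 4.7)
The plan is to apply Corollary~\ref{WatermanChanturiya} with the specific choice $h(n) = n\Lambda(n)^{-1}$. With this substitution the conclusion $\Lambda BV\subseteq V[h]$ becomes exactly the embedding we want, so it suffices to verify the hypotheses: namely, that $h$ is an admissible modulus of variation and that the sufficient condition \eqref{suf} holds.

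Condition \eqref{suf} is immediate, since plugging $h(n)=n/\Lambda(n)$ into the ratio $n/[\Lambda(n)h(n)]$ yields the constant $1$, whose supremum over $n$ equals $1<\infty$. The remaining task is to check that $h(n)=n/\Lambda(n)$ is a nondecreasing sequence of positive reals. Positivity is obvious, and monotonicity follows from the identity
$$
h(n+1) - h(n) = \frac{\Lambda(n) - n/\lambda_{n+1}}{\Lambda(n)\,\Lambda(n+1)},
$$
whose numerator is nonnegative because $\{\lambda_j\}$ is nondecreasing and therefore $\Lambda(n)=\sum_{j=1}^n 1/\lambda_j \geq n/\lambda_{n+1}$.

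The only step I would expect to require a brief comment, and the closest thing to an obstacle, is that the notion of modulus of variation used in the paper also asks for concavity of $h$. However, $V[h]$ is well known to depend only on the least concave majorant of $h$, so one may either verify concavity of $n/\Lambda(n)$ directly or pass to its concave envelope without altering the function class. In either case, Corollary~\ref{WatermanChanturiya} applies and delivers the embedding $\Lambda BV\subseteq V[n\Lambda(n)^{-1}]$.
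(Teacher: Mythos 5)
Your proof is correct and follows exactly the paper's route: the paper likewise obtains this corollary by applying Corollary~\ref{WatermanChanturiya} with $h(n)=n\Lambda(n)^{-1}$, for which condition \eqref{suf} is trivially satisfied. Your additional verification that $n/\Lambda(n)$ is nondecreasing (and your remark on concavity) is a welcome detail that the paper leaves implicit.
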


\begin{cor}
Let $1\leq p\leq q<\infty$. Then the embedding $\Lambda BV^{(p)}\subseteq \Gamma BV^{(q)}$ holds whenever
$$
\sup_{1\leq n<\infty}\frac{\Gamma(n)^{\frac1{q}}}{\Lambda(n)^{\frac1{p}}}<\infty.
$$
\end{cor}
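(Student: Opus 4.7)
The plan is to derive this corollary as an immediate specialization of Theorem \ref{2}, obtained by identifying the unweighted Waterman--Shiba classes $\Lambda\text{BV}^{(p)}$ and $\Gamma\text{BV}^{(q)}$ with the weighted classes $\Lambda V[h_1]^{(p)}$ and $\Gamma V[h_2]^{(q)}$ in the special case $h_1(n)=h_2(n)=1$ for all $n$. Under this identification, condition (\ref{c2}) collapses to exactly the hypothesis stated in the corollary.

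First I would verify the identification. A function $f$ belongs to $\Lambda\text{BV}^{(p)}$ precisely when $\sup_n v(n;\Lambda,p,f)<\infty$, and this is in turn the membership condition for $\Lambda V[1]^{(p)}$ (the supremum $\sup_n v(n;\Lambda,p,f)/h_1(n)$ with $h_1\equiv 1$). The same remark applies verbatim to $\Gamma\text{BV}^{(q)}=\Gamma V[1]^{(q)}$.

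Next I would check the monotonicity hypothesis of Theorem \ref{2}. The theorem requires that either $\{\Gamma(n)^{1/q}/\Lambda(n)^{1/p}\}$ or $\{h_2(n)^{1/q}/h_1(n)^{1/p}\}$ be nondecreasing. With $h_1\equiv h_2\equiv 1$, the second of these sequences is the constant sequence $1$, which is trivially nondecreasing. Hence the second alternative is satisfied automatically, without any auxiliary structural assumption on the Waterman sequences $\Lambda$ and $\Gamma$.

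With both hypotheses in place, applying Theorem \ref{2} is a direct substitution: condition (\ref{c2}) reads
$$
\sup_{1\leq n<\infty}\Big(\tfrac{\Gamma(n)}{1}\Big)^{\frac1q}\Big(\tfrac{1}{\Lambda(n)}\Big)^{\frac1p}<\infty,
$$
which is exactly the hypothesis of the corollary, and the conclusion $\Lambda\text{BV}^{(p)}\subseteq\Gamma\text{BV}^{(q)}$ follows. There is no genuine obstacle here; the only point worth flagging is that the ``either/or'' nature of the monotonicity hypothesis in Theorem \ref{2} is what lets us avoid assuming anything about the quotient $\Gamma(n)^{1/q}/\Lambda(n)^{1/p}$ beyond its boundedness.
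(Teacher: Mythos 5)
Your derivation is correct and is exactly the specialization the paper intends: the corollary is stated without proof as an immediate consequence of Theorem \ref{2} with $h_1\equiv h_2\equiv 1$, under the identification $\Lambda\text{BV}^{(p)}=\Lambda V[1]^{(p)}$ and $\Gamma\text{BV}^{(q)}=\Gamma V[1]^{(q)}$. Your observation that the constant sequence $h_2(n)^{1/q}/h_1(n)^{1/p}\equiv 1$ trivially fulfills the ``either/or'' monotonicity hypothesis, so that nothing beyond boundedness of $\Gamma(n)^{1/q}/\Lambda(n)^{1/p}$ is needed, is precisely the point that makes the application legitimate.
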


\begin{cor}
The embedding $V[h_1]\subseteq V[h_2]$ holds whenever
$$
\sup_{1\leq n<\infty}\frac{h_1(n)}{h_2(n)}<\infty.
$$
\end{cor}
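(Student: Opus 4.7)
The plan is to obtain this corollary as an immediate specialization of Theorem \ref{2}. Chanturiya's class $V[h]$ corresponds to the Waterman--Shiba class $\Lambda V[h]^{(p)}$ with exponent $p=1$ and the trivial Waterman sequence $\lambda_j = 1$ for every $j$, since then $\sum_j |f(I_j)|/\lambda_j$ reduces to the ordinary unweighted sum defining the modulus of variation. Accordingly, I would take $p=q=1$ and set $\lambda_j = \gamma_j = 1$ for all $j$, while keeping the two moduli of variation $h_1$ and $h_2$ from the statement.

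With these choices one has $\Lambda(n) = \Gamma(n) = n$, so the sequence $\Gamma(n)^{1/q}/\Lambda(n)^{1/p} = n/n = 1$ is (trivially) nondecreasing. This verifies the first monotonicity alternative in the hypotheses of Theorem \ref{2}, so the auxiliary condition on $\{h_2(n)^{1/q}/h_1(n)^{1/p}\}$ need not be checked at all.

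The sufficient condition \eqref{c2} then collapses to
$$
\sup_{1\leq n<\infty}\left(\frac{n}{h_2(n)}\right)\left(\frac{h_1(n)}{n}\right) = \sup_{1\leq n<\infty}\frac{h_1(n)}{h_2(n)} < \infty,
$$
which is exactly the hypothesis of the corollary. Applying Theorem \ref{2} therefore yields $V[h_1] = \Lambda V[h_1]^{(p)} \subseteq \Gamma V[h_2]^{(q)} = V[h_2]$, as required.

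There is essentially no obstacle; the only routine point is to confirm that the specialization $p=1$, $\lambda_j \equiv 1$ genuinely recovers $V[h]$ from $\Lambda V[h]^{(p)}$, which is immediate from the definitions. The same short argument, with $p=q=1$ but arbitrary $h_1,h_2$, simultaneously mirrors the structure of the preceding two corollaries, so no new technique is introduced.
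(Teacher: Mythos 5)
Your proposal is correct and matches the paper's intended derivation: the corollary is an immediate specialization of Theorem \ref{2} with $p=q=1$ and $\lambda_j=\gamma_j=1$, exactly as in the paper's proof of the preceding corollaries, and your verification that the constant sequence $\Gamma(n)^{1/q}/\Lambda(n)^{1/p}=1$ satisfies the monotonicity hypothesis and that \eqref{c2} collapses to $\sup_n h_1(n)/h_2(n)<\infty$ is accurate.
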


Let $\{p_n\}_{n=1}^\infty$ be a sequence of positive real numbers such that $1\leq p_n\uparrow p\leq\infty$.
A real-valued function $f$ on $[a,b]$ is said to be of $p_n$-$\Lambda$-bounded variation if
$$
V_\Lambda(f)=V_\Lambda(f;p_n\uparrow p):=\sup_{n\geq 1}\sup_{\{I_j\}}\Big(\sum_{j=1}^{s} \frac{|f(I_j)|^{p_n}}{\lambda_j}\Big)^{\tfrac{1}{p_n}}<\infty,
$$
where the $\{I_j\}_{j=1}^s$ are collections of nonoverlapping subintervals of $[a,b]$ such that  $\inf_{j} |I_j|\geq \frac{b-a}{2^n}$. The class of functions
of $p_n$-$\Lambda$-bounded variation is denoted by $\Lambda\text{BV}^{(p_n\uparrow p)}$. This class was introduced by Vyas in \cite{Vyas}. When $\lambda_j=1$ for all
$j$, we obtain the class $\text{BV}^{(p_n\uparrow p)}$---introduced by Kita and Yoneda \cite{KY}---which is a generalization of the well-known Wiener class
$\text{BV}_p$.

The mutual relationship between the generalized Wiener classes $\Lambda BV^{(p_n\uparrow p)}$ is rather chaotic even
in the special case where $p_n=q_n=1$ for all $n$; see \cite{Frank} for a nice and detailed discussion on this. Besides, in order to determine when
$BV^{(p_n\uparrow p)}\subseteq BV^{(q_n\uparrow q)}$ and $\Lambda BV^{(p)}\subseteq \Gamma BV^{(q_n\uparrow q)}$
(\cite[Theorem 3.1]{KY} and \cite[Theorem 1.4]{GHM}), fairly significant restrictions have been
imposed. So, it would be highly desirable to find a condition that implies the embedding
$\Lambda BV^{(p_n\uparrow p)}\subseteq \Gamma BV^{(q_n\uparrow q)}$ without any additional restrictions on the $p_n$, $q_n$, $\Lambda$ and $\Gamma$.
Theorem (\ref{th3}) provides such a condition.

Next lemma supplements \eqref{ineq} and is used in the proof of Theorem (\ref{th3}).
\begin{lemma}
If $0<q<1$, then \eqref{ineq} holds whenever the sequence $\Big\{\sum_{i=1}^k z_i/\sum_{i=1}^k y_i\Big\}_k$
is nondecreasing.
\end{lemma}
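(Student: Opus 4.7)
The plan is to bypass the $q\geq 1$ argument and instead prove the stronger pointwise bound in which the maximum on the right is replaced by its $k=n$ value. Write $S_k=\sum_{i=1}^k z_i$, $T_k=\sum_{i=1}^k y_i$ and $r_k=S_k/T_k$, with the convention $r_0=T_0=0$. Since $S_n^{1/q}/T_n$ is one of the terms appearing in $\max_{1\leq k\leq n}S_k^{1/q}/T_k$, it will suffice to establish
\begin{equation*}
\Big(\sum_{j=1}^n x_j^q z_j\Big)^{1/q}\leq \frac{S_n^{1/q}}{T_n}\sum_{j=1}^n x_j y_j.
\end{equation*}

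First I would rewrite each summand via the identity $z_j = r_j y_j + (r_j-r_{j-1})T_{j-1}$, which is just a rearrangement of $S_j-S_{j-1}=r_jT_j-r_{j-1}T_{j-1}$. The hypothesis makes $r_j-r_{j-1}\geq 0$, while the monotonicity of $\{x_j\}$ gives $x_j^q T_{j-1}=\sum_{k<j}x_j^q y_k\leq\sum_{k<j}x_k^q y_k$. Substituting, exchanging the order of summation, and telescoping $\sum_{j>k}(r_j-r_{j-1})=r_n-r_k$ should collapse the expression to
\begin{equation*}
\sum_{j=1}^n x_j^q z_j \leq \sum_{j=1}^n r_j x_j^q y_j+\sum_{k=1}^n(r_n-r_k)x_k^q y_k = r_n\sum_{j=1}^n x_j^q y_j.
\end{equation*}
Next I would apply H\"older's inequality to $\sum x_j^q y_j=\sum(x_j y_j)^q\cdot y_j^{1-q}$ with the conjugate exponents $1/q$ and $1/(1-q)$, both strictly greater than $1$ for $0<q<1$. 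This yields $\sum x_j^q y_j\leq\bigl(\sum x_j y_j\bigr)^q T_n^{1-q}$, and combining with the previous display gives
\begin{equation*}
\sum_{j=1}^n x_j^q z_j \leq r_n T_n^{1-q}\Big(\sum_{j=1}^n x_j y_j\Big)^q=\frac{S_n}{T_n^q}\Big(\sum_{j=1}^n x_j y_j\Big)^q,
\end{equation*}
so raising to the power $1/q$ produces exactly the reduced inequality above.

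The main obstacle is the very first step. A direct Abel summation on $\sum x_j^q z_j$ produces pieces of the form $(x_j^q-x_{j+1}^q)S_j$, and the estimates $(a+b)^q\leq a^q+b^q$ and $a^q-b^q\leq(a-b)^q$ available for $0<q<1$ both run in the direction opposite to what is needed to recover $\bigl(\sum x_j y_j\bigr)^q$ from $\sum(x_j-x_{j+1})^q T_j^q$. The decomposition $z_j=r_jy_j+(r_j-r_{j-1})T_{j-1}$ avoids this issue by shifting all the $q$-th-power bookkeeping onto a single clean application of H\"older, which is permissible only because $0<q<1$ keeps both conjugate exponents positive.
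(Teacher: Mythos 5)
Your argument is correct, and it reaches the conclusion by a genuinely different route. The paper's proof is a two-step reduction: it first invokes the already-established case $q=1$ of \eqref{ineq} as a black box to control $\sum_j x_jz_j$, then applies H\"older's inequality in the form $\sum_j (x_jz_j)^q z_j^{1-q}\leq(\sum_j x_jz_j)^q(\sum_j z_j)^{1-q}$ (i.e.\ with the $z$-weights), and finally absorbs the stray factor $\big(\sum_j z_j\big)^{1-q}$ into the maximum using the monotonicity of $S_k/T_k$. You instead work from scratch: the identity $z_j=r_jy_j+(r_j-r_{j-1})T_{j-1}$, combined with $x_j^qT_{j-1}\leq\sum_{k<j}x_k^qy_k$ and $r_j-r_{j-1}\geq0$ (this is exactly where the hypothesis enters), gives the clean intermediate bound $\sum_j x_j^qz_j\leq r_n\sum_j x_j^qy_j$, after which H\"older is applied with the $y$-weights. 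In effect you re-prove the needed $q=1$-type estimate for the nonincreasing sequence $\{x_j^q\}$ rather than citing it, and you swap the order in which the rearrangement step and H\"older are performed. What this buys: your proof is self-contained (it does not rely on \eqref{ineq} for $q\geq1$), it isolates the sharper explicit constant $S_n^{1/q}/T_n$ --- which under the hypothesis coincides with the maximum anyway --- and it only uses the monotonicity of $\{x_j\}$, whereas the route through \eqref{ineq} with $q=1$ also requires $\{y_j\}$ and $\{z_j\}$ to be nonincreasing. The paper's version is shorter given that \eqref{ineq} is already available from \cite{GHM}. Your closing remark about why a direct Abel summation on $\sum_j x_j^qz_j$ fails for $0<q<1$ is accurate, and both proofs sidestep that obstacle in essentially the same way, by routing the exponent $q$ through a single application of H\"older.
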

\begin{proof} First, we apply (\ref{ineq}) with $q=1$ to obtain
\begin{equation}\label{q=1}
\sum_{j=1}^n x_jz_j\leq \sum_{j=1}^n x_jy_j\max_{1\leq k\leq n} \Big(\sum_{i=1}^k z_i\Big)\Big(\sum_{i=1}^k y_i\Big)^{-1}.
\end{equation}
Then an application of the H\"older inequality yields
\begin{align*}
\sum_{j=1}^n x_j^qz_j=\sum_{j=1}^n (x_jz_j)^qz_j^{1-q}\leq\Big(\sum_{j=1}^n x_jz_j\Big)^q\Big(\sum_{j=1}^n z_j\Big)^{1-q}\\[.1in]
&\hspace{-4.6cm}\leq\Big(\sum_{j=1}^n x_jy_j\Big)^q\Big(\sum_{j=1}^n z_j\Big)^{1-q}\max_{1\leq k\leq n} \Big(\sum_{i=1}^k z_i\Big)^q\Big(\sum_{i=1}^k y_i\Big)^{-q}\\[.1in]
&\hspace{-4.6cm}\leq\Big(\sum_{j=1}^n x_jy_j\Big)^q\max_{1\leq k\leq n} \Big(\sum_{i=1}^k z_i\Big)\Big(\sum_{i=1}^k y_i\Big)^{-q},
\end{align*}
where the last two inequalities are due, respectively, to \eqref{q=1} and the fact that $\Big\{\sum_{i=1}^k z_i/\sum_{i=1}^k y_i\Big\}_k$ is nondecreasing.
\end{proof}

\begin{theorem}\label{th3}
The embedding $\Lambda BV^{(p_n\uparrow p)}\subseteq \Gamma BV^{(q_n\uparrow q)}$ holds whenever
$$
\sup_{1\leq n<\infty}\sum_{k=1}^\infty \Delta\Big(\frac1{\gamma_k}\Big)\max_{1\leq m\leq k}m\Lambda(m)^{-\frac{q_n}{p_n}}<\infty.
$$
\end{theorem}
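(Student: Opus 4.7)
The strategy mirrors the proof of Theorem \ref{t1}, with inequality \eqref{ineq} and the supplementary Lemma just above playing the role that Lemma \ref{lemma1} played there. Using both versions of \eqref{ineq} simultaneously is precisely what lets us dispense with any a priori constraint on the relative sizes of $p_n$ and $q_n$.

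Fix $f\in\Lambda BV^{(p_n\uparrow p)}$ and $n\geq1$, and let $\{I_j\}_{j=1}^s$ be a nonoverlapping collection of subintervals of $[a,b]$ with $\inf_j|I_j|\geq(b-a)/2^n$. First I would relabel the intervals so that $|f(I_j)|$ is nonincreasing in $j$; since $\{1/\gamma_j\}$ is nonincreasing, the equimonotonic sequences inequality \cite[Theorem 368]{Hardy} guarantees that this relabeling only increases $\sum_j|f(I_j)|^{q_n}/\gamma_j$, so nothing is lost. Applying Abel's partial summation exactly as in the proof of Theorem \ref{t1} yields
$$
\sum_{k=1}^s\frac{|f(I_k)|^{q_n}}{\gamma_k}=\sum_{k=1}^{s-1}\Delta\Big(\frac1{\gamma_k}\Big)\sum_{j=1}^k|f(I_j)|^{q_n}+\frac1{\gamma_s}\sum_{j=1}^s|f(I_j)|^{q_n},
$$
which reduces the problem to bounding the partial sums $\sum_{j=1}^k|f(I_j)|^{q_n}$ uniformly in $k$.

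The key step is to apply \eqref{ineq} (if $q_n/p_n\geq 1$) or the Lemma immediately preceding this theorem (if $q_n/p_n<1$), in both cases with $x_j:=|f(I_j)|^{p_n}$, $y_j:=1/\lambda_j$, $z_j:=1$, and $q$ replaced by $q_n/p_n$. The required monotonicity hypotheses hold: $\{x_j\}$ is nonincreasing thanks to the relabeling, $\{y_j\}$ is nonincreasing since $\Lambda$ is a Waterman sequence, and the ratio $\{k/\Lambda(k)\}_k$ is nondecreasing (a direct consequence of $\{1/\lambda_j\}$ being nonincreasing), so the Lemma's extra hypothesis is also met. In either case, raising to the power $q_n/p_n$ gives
$$
\sum_{j=1}^k|f(I_j)|^{q_n}\leq\Big(\sum_{j=1}^k\frac{|f(I_j)|^{p_n}}{\lambda_j}\Big)^{q_n/p_n}\max_{1\leq m\leq k}m\,\Lambda(m)^{-q_n/p_n},
$$
and the first factor is at most $V_\Lambda(f)^{q_n}$ since $\{I_j\}_{j=1}^k$ still satisfies $\inf_j|I_j|\geq(b-a)/2^n$.

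Substituting this estimate into the Abel decomposition and absorbing the boundary term $(1/\gamma_s)\sum_{j=1}^s|f(I_j)|^{q_n}$ into the tail $\sum_{k\geq s}\Delta(1/\gamma_k)(\cdots)$, exactly as in the proof of Theorem \ref{t1}, I reach
$$
\sum_{k=1}^s\frac{|f(I_k)|^{q_n}}{\gamma_k}\leq V_\Lambda(f)^{q_n}\sum_{k=1}^\infty\Delta\Big(\frac1{\gamma_k}\Big)\max_{1\leq m\leq k}m\,\Lambda(m)^{-q_n/p_n}.
$$
Taking the $q_n$-th root and then passing to the supremum over admissible collections and over $n$, the hypothesis of the theorem provides a uniform finite bound (since $q_n\geq 1$, the $q_n$-th root of a quantity that is bounded uniformly in $n$ is itself bounded uniformly in $n$), whence $f\in\Gamma BV^{(q_n\uparrow q)}$. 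I expect the main subtlety to be the case split at $q_n/p_n=1$: without the supplementary Lemma just established, one would have to impose $p_n\leq q_n$ (or comparable structural restrictions on $\Lambda,\Gamma$) to invoke \eqref{ineq} alone, and dispensing with such restrictions is precisely the purpose of the theorem.
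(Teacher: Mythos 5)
Your proof is correct and follows essentially the same route as the paper's: Abel summation in $k$, then inequality \eqref{ineq} with $x_j=|f(I_j)|^{p_n}$, $y_j=1/\lambda_j$, $z_j=1$ and exponent $q_n/p_n$ to bound the partial sums $\sum_{j\le k}|f(I_j)|^{q_n}$ by $V_\Lambda(f)^{q_n}\max_{1\le m\le k}m\Lambda(m)^{-q_n/p_n}$, followed by absorption of the boundary term into the tail of the series. You are in fact slightly more explicit than the paper, which invokes \eqref{ineq} without spelling out the case split between $q_n/p_n\ge1$ and $q_n/p_n<1$ (the latter needing the supplementary lemma, whose monotonicity hypothesis $\{k/\Lambda(k)\}$ nondecreasing you correctly verify).
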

\begin{proof}
Assume that $f\in\Lambda\text{BV}^{(p_n\uparrow p)}$. For an arbitrary but fixed $n$, let $\{I_j\}_{j=1}^s$ be a nonoverlapping collection of subintervals of $[0,1]$ with $\inf|I_j|\geq \frac1{2^n}$,
and put $q=q_n/p_n$, $x_j=|f(I_j)|^{p_n}$, $y_j=1/\lambda_j$, $z_j=1/\gamma_j$. Without loss of generality, we may also assume that the $x_j$ are arranged in
descending order. Now, by Abel's transformation and applying (\ref{ineq}) we obtain
\begin{align*}
\sum_{k=1}^s \frac{|f(I_k)|^{q_n}}{\gamma_k}=\sum_{k=1}^{s-1}\Delta\Big(\frac1{\gamma_k}\Big)\sum_{j=1}^k |f(I_j)|^{q_n}+\frac1{\gamma_s}\sum_{j=1}^s |f(I_j)|^{q_n}\\[.1in]
&\hspace{-9.5cm}\leq\sum_{k=1}^{s-1}\Delta\Big(\frac1{\gamma_k}\Big)\Big(\sum_{j=1}^k \frac{|f(I_j)|^{p_n}}{\lambda_j}\Big)^{\frac{q_n}{p_n}}
\max_{1\leq m\leq k}m\Lambda(m)^{-\frac{q_n}{p_n}}+\frac1{\gamma_s}\Big(\sum_{j=1}^s \frac{|f(I_j)|^{p_n}}{\lambda_j}\Big)^{\frac{q_n}{p_n}}
\max_{1\leq m\leq s}m\Lambda(m)^{-\frac{q_n}{p_n}}\\[.1in]
&\hspace{-9.5cm}\leq\sum_{k=1}^{s-1}\Delta\Big(\frac1{\gamma_k}\Big)V_\Lambda(f)^{q_n}
\max_{1\leq m\leq k}m\Lambda(m)^{-\frac{q_n}{p_n}}
+\frac1{\gamma_s}V_\Lambda(f)^{q_n}\max_{1\leq m\leq s}m\Lambda(m)^{-\frac{q_n}{p_n}}\\[.1in]
&\hspace{-9.5cm}\leq\sum_{k=1}^{s-1}\Delta\Big(\frac1{\gamma_k}\Big)V_\Lambda(f)^{q_n}
\max_{1\leq m\leq k}m\Lambda(m)^{-\frac{q_n}{p_n}}
+\sum_{k=s}^{\infty}\Delta\Big(\frac1{\gamma_k}\Big)V_\Lambda(f)^{q_n}
\max_{1\leq m\leq k}m\Lambda(m)^{-\frac{q_n}{p_n}}\\[.1in]
&\hspace{-9.5cm}=V_\Lambda(f)^{q_n}\sum_{k=1}^{\infty}\Delta\Big(\frac1{\gamma_k}\Big)\max_{1\leq m\leq k}m\Lambda(m)^{-\frac{q_n}{p_n}}<\infty,
\end{align*}
where we have used the fact that
$$
\frac1{\gamma_s}\max_{1\leq m\leq s}m\Lambda(m)^{-\frac{q_n}{p_n}}\leq\sum_{k=s}^{\infty}\Delta\Big(\frac1{\gamma_k}\Big)
\max_{1\leq m\leq k}m\Lambda(m)^{-\frac{q_n}{p_n}}.
$$
Taking suprema over all collections $\{I_j\}_{j=1}^s$ as above, and over all $n$ yields $V_\Gamma(f)<\infty$. That is, $f\in\Gamma BV^{(q_n\uparrow q)}$.
\end{proof}

\begin{cor}
The embedding $\Lambda BV^{(p)}\subseteq\Gamma BV^{(q)}$ holds whenever
$$
\sum_{n=1}^\infty \Delta\Big(\frac1{\gamma_n}\Big)\max_{1\leq k\leq n}\frac{k}{\Lambda(k)^{\frac{q}{p}}}<\infty.
$$
In particular, the embedding $\Lambda BV\subseteq\Gamma BV$ holds whenever
$$
\sum_{n=1}^\infty \Delta\Big(\frac1{\gamma_n}\Big)\frac{n}{\Lambda(n)}<\infty.
$$
\end{cor}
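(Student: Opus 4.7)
The plan is to derive both assertions of the corollary as direct specializations of Theorem \ref{th3}, by taking the constant sequences $p_n\equiv p$ and $q_n\equiv q$.

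The first task is to identify the function classes. When $p_n=p$ is constant, any given nonoverlapping collection $\{I_j\}$ of subintervals of $[0,1]$ satisfies $\inf_j|I_j|\geq 2^{-n}$ for all $n$ sufficiently large, so the inner supremum in the definition of $V_\Lambda(f;p_n\uparrow p)$ is nondecreasing in $n$ and its supremum over $n$ agrees with the unrestricted supremum over all nonoverlapping collections. This gives $\Lambda BV^{(p_n\uparrow p)}=\Lambda BV^{(p)}$, and analogously $\Gamma BV^{(q_n\uparrow q)}=\Gamma BV^{(q)}$.

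With these identifications, $q_n/p_n\equiv q/p$, and the hypothesis of Theorem \ref{th3} reads
$$
\sup_{1\leq n<\infty}\sum_{k=1}^\infty\Delta\Big(\frac{1}{\gamma_k}\Big)\max_{1\leq m\leq k} m\,\Lambda(m)^{-q/p}<\infty.
$$
Since the summand does not depend on $n$, the outer supremum is redundant, and a relabelling of the summation variables ($k\to n$, $m\to k$) recovers exactly the first condition of the corollary.

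For the Waterman case $p=q=1$, the remaining step is to observe that $\{k/\Lambda(k)\}$ is nondecreasing; hence the inner maximum simplifies to $n/\Lambda(n)$ and the first criterion collapses to the second. The monotonicity follows from $\{\lambda_j\}$ being nondecreasing: $\lambda_j\leq\lambda_{k+1}$ for $j\leq k$ yields $\Lambda(k)\geq k/\lambda_{k+1}$, which rearranges to $(k+1)\Lambda(k)\geq k\Lambda(k+1)$. I do not anticipate any substantial obstacle; the only non-mechanical ingredient is the class identification, which is a straightforward unpacking of definitions.
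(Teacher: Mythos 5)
Your proposal is correct and follows the route the paper intends: the corollary is a direct specialization of Theorem \ref{th3} to constant sequences $p_n\equiv p$, $q_n\equiv q$ (the paper leaves this unproved). Your two supporting observations --- that the length restriction $\inf_j|I_j|\geq 2^{-n}$ becomes vacuous so the classes coincide with $\Lambda BV^{(p)}$ and $\Gamma BV^{(q)}$, and that $k/\Lambda(k)$ is nondecreasing because $\Lambda(k)\geq k/\lambda_{k+1}$ --- are both verified correctly.
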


\end{document}